\documentclass{article}
\usepackage{gastex}
\usepackage{tikz, hyperref}
\usepackage{enumitem}
\usepackage{amsmath,amscd}
\usepackage{amssymb}
\usepackage{upref}
\usepackage{multirow}
\usepackage{multicol}
\usepackage{url}
\usepackage[all]{xy}

\usepackage{color}
\usepackage{lineno}
\usepackage{makeidx}
\makeindex
\usepackage{theorem}
\newtheorem{theorem}{Theorem}
\newtheorem{lemma}[theorem]{Lemma}

\newtheorem{corollary}[theorem]{Corollary}
{\theorembodyfont{\rmfamily}%
\newtheorem{example}[theorem]{Example}
 }
\newenvironment{proof}{\noindent\textit{Proof.}}
{\QED\vskip\theorempostskipamount} 
\newenvironment{proofof}[1]{\noindent\textit{Proof
\protect{#1}.}}
{\QED\vskip\theorempostskipamount}
\def\petitcarre{\vrule height4pt width 4pt depth0pt}
\def\QED{\relax\ifmmode\eqno{\hbox{\petitcarre}}\else{%
\unskip\nobreak\hfil\penalty50\hskip2em\hbox{}\nobreak\hfil
\petitcarre
\parfillskip=0pt \finalhyphendemerits=0\par\smallskip}
\fi}

\newcommand\cL{\mathcal{L}}

\newcommand{\Z}{\mathbb{Z}}
\newcommand{\R}{\mathbb{R}}
\def\un(#1){\underline{#1}\,}

\DeclareMathOperator{\rank}{rank}

\def\Im{\text{\upshape{Im}}}

\definecolor{ivoire}{rgb}{0.99,0.99,0.8}

\DeclareMathOperator{\Sup}{Sup}

\definecolor{light-gray}{gray}{0.7}
\usepackage{calc}
\newcounter{hours}\newcounter{minutes}

\numberwithin{theorem}{section}
\numberwithin{equation}{section}
\numberwithin{figure}{section}
\numberwithin{table}{section}

\definecolor{lime}{HTML}{A6CE39}
\DeclareRobustCommand{\orcidicon}{%
	\begin{tikzpicture}
	\draw[lime, fill=lime] (0,0)
	circle [radius=0.16]
	node[white] {{\fontfamily{qag}\selectfont \tiny ID}};
	\draw[white, fill=white] (-0.0625,0.095)
	circle [radius=0.007];
	\end{tikzpicture}
	\hspace{-2mm}
}
\foreach \x in {A, ..., Z}{%
\expandafter\xdef\csname orcid\x\endcsname{\noexpand%
\href{https://orcid.org/\csname orcidauthor\x\endcsname}{\noexpand\orcidicon}}
}

\title{Sofic measures}

\author{Marie-Pierre B\'eal\orcidA{}%
Vincent Jug\'e, Jean Mairesse
and Dominique Perrin}
\begin{document}

\maketitle

\begin{abstract}
Sofic measures, also known as hidden Markov measures, have been extensively studied.
In this paper, we survey some equivalent definitions of this notion and improve a bound
for deciding whether a sofic measure is~$k$-step Markov. We prove that if an invariant sofic measure with a linear representation of dimension~$n$ is a~$k$-step Markov chain, then~$k$ can be chosen at most equal to~$2^{n^2-1}$.
\end{abstract}

\section{Introduction}
Sofic measures are also known as hidden Markov measures and have been widely studied.
They
are of frequent use in applications.
They were studied in~\cite{HanselPerrin1989} under the name of rational measures, and defined as a particular class of rational series.
Several equivalent definitions are given in the survey of Boyle and Petersen~\cite{BoylePetersen2011}, where the term of sofic measure is introduced.
A result due to Heller~\cite{Heller1965} characterizes the sofic measures that are~$k$-step Markov.
A bound established in~\cite{BoylePetersen2011} shows that the problem of whether a sofic measure is~$k$-step Markov  is decidable.
In this paper, we improve significantly this bound (Theorem~\ref{theoremNew}).
The proof uses a result of extremal set theory~\cite{HegedusFrankl2024}, itself a variation of a result of Lov\'asz~\cite{Lovasz1977}.

The paper is organized as follows.
A preliminary section contains definitions from symbolic dynamics and probability theory.
Section~\ref{sectionSofic} introduces sofic measures as images of~$1$-step Markov measures by a factor map:
this is the traditional definition of hidden Markov measure.
We prove the equivalence to the definition as a particular class of rational series (Theorem~\ref{theoremHP}), originally due to Furstenberg~\cite{Furstenberg1960}.
In section~\ref{sectionMarkovSofic}, we recall a result due to Holland~\cite{Holland1968} characterizing the sofic measures that are~$k$-step Markov (Theorem~\ref{theoremHolland}).
Section \ref{sectionIdentification} contains our main result (Theorem~\ref{theoremNew}).
\section{Preliminaries}
Let~$A$ be a finite alphabet.
Let~$A^*$ denote the set of finite words over~$A$.
The empty word is denoted by~$\varepsilon$.

Let~$A^\Z$ denote the set of two-sided infinite sequences on the alphabet~$A$, considered as a compact topological space for the usual topology.
For~$x\in A^\Z$ and~$i\leqslant j$, the factor~$x_i\cdots x_{j}$ is denoted by~$x_{[i,j]}$.
Let~$S$ denote the shift transformation on~$A^\Z$ defined by~$y=S(x)$ if~$y_n=x_{n+1}$ for every~$n\in\Z$.

A \emph{shift space} is a closed set of two-sided
infinite sequences on an alphabet~$A$ that is invariant under the action of~$S$.
The \emph{language} of the shift space~$X$, denoted by~$\cL(X)$, is the set of words appearing as factors of elements of~$X$, that is, the set of words~$x_{[i,j)}$ for which~$x\in X$ and~$i<j$.
For an individual sequence, let~$\cL(x)$ denote the set of factors of~$x$.
Thus,~$\cL(X)=\cup_{x\in X}\cL(x)$.
Let~$\cL_n(X)$ also denote the elements of~$\cL(X)$ of length~$n$.

A shift space is of \emph{finite type} if there is a finite set~$F$ of words such that~$\cL(X)$ is the set of all sequences without any factor in~$F$.

Let~$X$ and~$Y$ be shift spaces on alphabets~$A$ and~$B$, respectively.
Let~$m$ and~$n$ be non-negative integers and let~$f\colon\cL_{m+n+1}\to B$ be a map, called a \emph{block map}.
The \emph{sliding block code} defined by~$f$ is the map~$\phi$ from~$X$ to~$B^\Z$ defined by~$\phi(x)_i=f(x_{i-m}\cdots x_{i+n})$ for all~$i\in\Z$.
We say that~$\phi$ is a~\emph{$k$-block map} if~$k=m+n+1$.

If~$\phi$ maps~$X$ onto~$Y$, it is called a \emph{factor map} from~$X$ to~$Y$.
A one-to-one factor map is called a \emph{conjugacy}.

A \emph{sofic shift space} is the image of a shift space of finite type by factor map.

Let~$X$ be a shift space.
For~$k\geqslant 1$, let~$f\colon \cL_k(X)\to A_k$ be a bijection from the set~$\cL_k(X)$ onto an alphabet~$A_k$.
The sliding block code~$\gamma_k$ defined by~$y=\gamma_k(x)$ if
\[y_n=f(x_{n-k+1}\cdots x_n)\]
is a conjugacy, called the \emph{$k$\textsuperscript{th} higher block code}, and~$X^{(k)}=\gamma_k(X)$ is called the \emph{$k$-block presentation} of~$X$.

A probability measure on a shift space is a probability measure on the
family of Borel subsets of~$X$ (that is, the~$\sigma$-algebra generated by open sets).
A  probability measure~$\mu$ on a shift space~$X$ is \emph{invariant} if~$\mu(S^{-1}U)=\mu(U)$ for every Borel subset~$U$ of~$X$.

If~$\phi\colon X\to Y$ is a factor map, the measure~$\nu=\phi\mu$, called the \emph{image} of~$\mu$ by~$\phi$, is defined by~$\nu(U)=\mu(\phi^{-1}(U))$ for every Borel subset~$U$ of~$Y$.
If~$\mu$ is invariant, the image of~$\mu$ is invariant.

For~$w\in \cL(X)$, the cylinder defined by a word~$w$ of length~$n$ is the set~$[w]_X=\{x\in X\mid x_{[0,n)}=w\}$.

A  probability measure~$\mu$ on a shift space~$X$ defines a map~$\pi\colon \cL(X)\to[0,1]$, also commonly denoted by~$\hat{\mu}$, such that~$\pi(w)=\mu([w]_X)$.
It satisfies
\begin{equation}
\pi(\varepsilon)=1,\label{eqepsilon}
\end{equation}
and, for every~$w\in\cL(X)$,
\begin{equation}
\pi(w)=\sum_{a\in A, wa\in \cL(X)}\pi(wa).\label{eqpi}
\end{equation}
If~$\mu$ is invariant, it also satisfies
\begin{equation}
\pi(w)=\sum_{a\in A, aw\in\cL(X)}\pi(aw).\label{eqpiR}
\end{equation}
Conversely, for every~$\pi$ satisfying \eqref{eqepsilon},
\eqref{eqpi}
and \eqref{eqpiR} for every~$w\in \cL(X)$, there is a unique invariant probability measure~$\mu$ on~$X$ such that~$\hat{\mu}=\pi$.

As a particular case, a \emph{Bernoulli measure} on~$A^\Z$ is a probability measure~$\mu$ such that~$\hat{\mu}$ is a monoid morphism from~$A^*$ into~$[0,1]$.

If~$\mu$ is a probability mesure on a shift space~$X$, its \emph{support}~$\Sup(\mu)$ is the set of sequences~$x$ such that~$\hat{\mu}(w)>0$ for all~$w\in\cL(x)$.
It is a shift space and~$\mu$ defines a probability measure on~$\Sup(\mu)$.

Given a finite set~$A$, an irreducible~$A\times A$-stochastic matrix~$M$ and a stochastic row vector~$v\in \R^A$ such that~$vM=v$, the \emph{$1$-step Markov measure} defined by~$(v,M)$ is the measure~$\mu$ on~$A^\Z$ such that~$\pi=\hat{\mu}$ satisfies
\begin{displaymath}
\pi(a_1a_2\cdots a_n)=v_{a_1}M_{a_1,a_2}\cdots M_{a_{n-1},a_n}.
\end{displaymath}
It is an invariant probability measure on the shift of finite type
\begin{displaymath}
X_M=\{x\in A^\Z\mid M_{x_i,x_{i+1}}>0\text{ for all~$i\in\Z$}\},
\end{displaymath}
and therefore on any shift containing~$X_M$.
When we refer to a~$1$-step Markov measure~$\mu$ on a shift space~$X$, we mean that~$\mu$ is defined by a pair~$(v,M)$ as above and that~$X_M\subseteq X$.

A probability measure~$\mu$ on~$X$ is \emph{$k$-step Markov} if its image under the~$k$\textsuperscript{th} higher block code~$\gamma_k$ is a~$1$-step Markov measure~$\nu$.
We have then
\begin{equation}
\hat{\nu}(\gamma_k(u))=\hat{\mu}(u),\label{eqhatnu}
\end{equation}
for every~$u\in \cL_k(X)$.
\section{Sofic measures}\label{sectionSofic}
A  probability measure~$\nu$ on a shift space~$Y$ is \emph{sofic} if~$\nu=\phi\mu$ for some factor map~$\phi$ from a shift of finite type~$X$, a~$1$-step Markov mesure~$\mu$ on~$X$ and a factor map~$\phi\colon X\to Y$.
Since~$Y$ is the image of~$X$ by~$\phi$, it is a sofic shift.
The factor map~$\phi$ can be assumed to be~$1$-block since the image of a~$1$-step Markov
measure on~$X$ in the~$k$-block presentation of~$X$ is also a~$1$-step Markov measure (see the figure below, where~$\phi_k=\phi\circ\gamma_k^{-1}$).
\[
\begin{tikzpicture}
\node(X) at(0,1){$X$};\node(Xk) at(2,1){$X^{(k)}$};\node(Y) at(1,0){$Y$};
\draw[->,left](X)edge node{$\phi$}(Y);\draw[->,above](X)edge node{$\gamma_k$}(Xk);\draw[->,right](Xk)edge node{$\phi_k$}(Y);
\end{tikzpicture}
\]

Since a~$1$-step Markov measure is assumed to be invariant, a sofic measure is invariant.

A sofic measure is also known as a \emph{hidden Markov measure}.
The term \emph{Markov chain} is also of common use instead of Markov measure, just as \emph{hidden Markov chain} is used instead of hidden Markov measure.

Given an integer~$n\geqslant 1$, a row vector~$\lambda\in \R^n$, a column vector~$\gamma\in\R^n$ and a morphism~$\varphi$ is a morphism  from~$A^*$ to the monoid of~$n\times n$ real matrices, the triple~$(\lambda,\varphi,\gamma)$ defines a map~$\pi\colon A^*\to\R$ by
\begin{equation}
\pi(w)=\lambda\varphi(w)\gamma.\label{eqLinRep}
\end{equation}
for~$w\in A^*$. 

The triple~$(\lambda,\varphi,\gamma)$ is said to be a \emph{linear representation} of~$\pi$.
The representation is non-negative if~$\lambda$, $\gamma$ and the matrices~$\varphi(w)$ are non-negative.
Two triples are equivalent if they define the same map.

The following result is a combination of ideas appearing in~\cite{Furstenberg1960},\cite{Heller1965} and~\cite{HanselPerrin1989} (see~\cite[Theorem 4.20]{BoylePetersen2011}). A linear representation of a probability measure~$\mu$ is a linear representation of the map~$\hat{\mu}$.
\begin{theorem}\label{theoremHP}
Let~$Y$ be a shift space over the alphabet~$A$.
An invariant probability measure on~$Y$ has a non-negative linear representation if and only if it is sofic.
In this case, the linear representation can be chosen such that
\begin{enumerate}[label=\rm(\roman*)]
\item the matrix~$M=\sum_{a\in A}\varphi(a)$ is stochastic;
\item the vector~$\lambda$ is stochastic and~$\gamma$ has all its components equal to~$1$;
\item the vector~$\lambda$ satisfies the relation~$\lambda M=\lambda$.
\end{enumerate}

\end{theorem}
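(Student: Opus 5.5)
The easy direction is ``sofic implies non-negative linear representation''. Let $\nu=\phi\mu$, where $\mu$ is a $1$-step Markov measure defined by $(v,M)$ on a shift of finite type $X$ over an alphabet $B$, and $\phi$ is $1$-block with block map $f\colon B\to A$; the reduction to $1$-block maps was explained above. For $a\in A$, I would define $\varphi(a)$ as the $B\times B$ matrix with $\varphi(a)_{b,c}=M_{b,c}$ if $f(c)=a$ and $0$ otherwise. Set $\gamma=\mathbf 1$ and let $\lambda$ be the row vector $\lambda_c=\sum_{b}v_bM_{b,c}=v_c$. Then for $w=a_1\cdots a_n$,
\[
\hat\nu(w)=\sum_{f(b_1)=a_1,\dots,f(b_n)=a_n} v_{b_1}M_{b_1,b_2}\cdots M_{b_{n-1},b_n}.
\]
This equals $v D_{a_1}\varphi(a_2)\cdots\varphi(a_n)\mathbf 1$, where $D_{a}$ is the diagonal projection onto $f^{-1}(a)$. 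Since $vD_{a_1}\varphi(a_2)=\sum_b v_b M\cdots$ is awkward, I would instead use the stationarity $v=vM$ to rewrite $vD_{a_1}=v\varphi(a_1)$ on the relevant coordinates: $(v\varphi(a_1))_c=\sum_b v_bM_{b,c}[f(c)=a_1]=v_c[f(c)=a_1]$. Hence $\hat\nu(w)=\lambda\varphi(w)\gamma$ with $\lambda=v$. The conditions also come for free: $\sum_a\varphi(a)=M$ is stochastic, $\lambda=v$ is stochastic with $vM=v$, and $\gamma=\mathbf 1$. So (i)--(iii) hold.

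The converse, ``non-negative linear representation implies sofic'', has two stages. \emph{Stage 1} normalizes an arbitrary non-negative representation $(\lambda,\varphi,\gamma)$ of $\pi=\hat\nu$ into one satisfying (i)--(iii). \emph{Stage 2} realizes such a normalized representation as a hidden Markov measure.

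For Stage 2, given (i)--(iii), take the alphabet $B=\{(i,a,j): \varphi(a)_{i,j}>0\}$. Define the Markov matrix $P_{(i,a,j),(j,b,k)}=\varphi(b)_{j,k}$, which is stochastic by (i), and the stationary vector $\rho_{(i,a,j)}=\lambda_i\varphi(a)_{i,j}$, which is invariant by (iii). The $1$-block map is $(i,a,j)\mapsto a$. Telescoping then gives $\phi\mu(a_1\cdots a_n)=\lambda\varphi(a_1)\cdots\varphi(a_n)\mathbf 1=\pi(w)$. One snag is that the paper's definition of a Markov measure requires $P$ irreducible. To handle this, I would restrict to the support of $\lambda$ and decompose into irreducible (closed) classes. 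An invariant $\lambda$ of a stochastic matrix charges only recurrent classes, so $\mu$ is a convex combination of irreducible Markov measures. I expect to need either a relaxation of irreducibility, or a trick that merges the classes into one irreducible chain via an extra marker layer, for example by taking disjoint unions and coding. I would check how the paper intends this, and otherwise accept the reducible case.

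Stage 1 is the main obstacle. Starting from a non-negative $(\lambda,\varphi,\gamma)$, first delete the states $i$ that are not accessible from $\lambda$ or not co-accessible to $\gamma$. Then set $M=\sum_a\varphi(a)$. By \eqref{eqpi}, $\lambda\varphi(w)M\gamma=\lambda\varphi(w)\gamma$ for all $w$. The delicate point is to upgrade this to the vector identity $M\gamma=\gamma$, or to replace $\gamma$ by a vector $\gamma'$ with $M\gamma'=\gamma'$ that gives the same $\pi$. I would take $\gamma'=\lim_N \frac1N\sum_{k<N}M^k\gamma$. The Ces\`aro limit exists because $\lambda\varphi(w)M^k\gamma=\pi(w)$ is bounded, and after trimming this bounds $M^k\gamma$ entrywise. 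The limit is non-negative, $M$-invariant, and preserves $\pi$. Restricting to states where $\gamma'_i>0$, the diagonal conjugation $\varphi(a)\mapsto \Gamma^{-1}\varphi(a)\Gamma$, with $\Gamma=\mathrm{diag}(\gamma')$, $\lambda\mapsto\lambda\Gamma$ and $\gamma\mapsto\mathbf 1$, makes $M$ stochastic. This gives (i) and (ii), with $\lambda\mathbf 1=\pi(\varepsilon)=1$. For (iii), the symmetric argument uses \eqref{eqpiR}: replace $\lambda$ by the Ces\`aro limit of $\lambda M^k$. This stays stochastic and $M$-invariant and preserves $\pi$, since $\lambda M^k\varphi(w)\mathbf 1=\pi(w)$ by invariance. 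Having (i)--(iii), Stage 2 finishes the proof.
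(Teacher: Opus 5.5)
Your proof is correct and follows essentially the paper's route. The forward direction uses the same representation $(v,\varphi,\mathbf 1)$, and the converse uses the same trimming, Ces\`aro averaging of $M^k\gamma$, diagonal conjugation and symmetric treatment of $\lambda$, followed by a Markov chain on an enlarged alphabet. There you use edges $(i,a,j)$ with the explicit stationary vector $\lambda_i\varphi(a)_{i,j}$, whereas the paper uses pairs $(i,a)$ and only specifies the marginals $\sum_a v_{i,a}=\lambda_i$ of its initial vector.

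On irreducibility, the paper is silent, and your marker-layer idea cannot work in general. The image of an irreducible Markov measure is ergodic, yet the average of the point masses at the constant sequences $\cdots aaa\cdots$ and $\cdots bbb\cdots$ is invariant, not ergodic, and has a non-negative representation of dimension~$2$. So relaxing irreducibility is the right fix.
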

\begin{proof}
Let~$\nu=\phi\mu$ be a sofic measure on~$Y$, with~$\phi\colon X\to Y$ a factor map and~$\mu$ a 1-step Markov on the shift of finite type~$X$. Let~$(v,M)$ be the pair of a vector
and a~$B\times B$-stochastic matrix defining the Markov measure~$\mu$ on~$X=X_M$.
We may assume that the factor map~$\phi$ is~$1$-block, and therefore defined by a block map~$f\colon B\to A$.
Consider the graph on~$B$ with edges~$(p,a,q)$ when~$f(q)=a$.
The measure~$\nu$ is then defined by
\begin{displaymath}
\hat{\nu}(w)=\sum_{\text{$c$ a path labelled by~$w$}}\hat{\mu}(c).
\end{displaymath}
We obtain a linear representation of~$\nu$ by setting~$\lambda=v$, taking~$\gamma$ as the column vector of all ones, and defining~$\varphi(w)$ as the~$B\times B$-matrix with coefficients
\begin{displaymath}
\varphi_{p,q}(a)=\begin{cases}M_{p,q}&\text{ if~$f(q)=a$,}\\
0&\text{otherwise.}\end{cases}
\end{displaymath}
The triple~$(\lambda,\varphi,\gamma)$ satisfies conditions (i), (ii)
and (iii). 

Conversely, assume that~$\mu$ has a non-negative linear representation~$(\lambda,\varphi,\gamma)$.
We first show that we can assume that the triple satisfies conditions (i), (ii) and (iii).

Set~$M=\sum_{a\in A}\varphi(a)$.
We may assume that the representation is trim, that is, that for every index~$q$, there exist integers~$k,\ell\geqslant 0$ such that~$(\lambda M^k)_q>0$ and~$(M^\ell\gamma)_q>0$.

Then we can change~$(\lambda,\varphi,\gamma)$ into~$(\lambda,\varphi,M^k\gamma)$.
Indeed, arguing by induction on~$k$, we have, for every word~$u$,
\begin{displaymath}
\lambda\varphi(u)M^k \gamma
=\sum_{a\in A}\lambda\varphi(u)\varphi(a)M^{k-1}\gamma
=\sum_{a\in A}\hat{\mu}(ua)=\hat{\mu}(u).
\end{displaymath}
Since the representation is trim, the coefficients of~$M^n$ remain bounded:
if~$M^n_{pq}$ is unbounded,~$\lambda M^{k+n+\ell}\gamma$ is unbounded whenever~$\lambda M^k_p>0$ and~$M^\ell_q \mu >0$.
Let~$\gamma'$ be a vector which is the limit of a subsequence of the sequence~$\gamma_k=\frac{1}{k}\sum_{i=1}^kM^k\gamma$.
Then~$M\gamma'=\gamma'$ and~$(\lambda,\varphi,\gamma')$ is a non-negative linear representation of~$\mu$.

We may also assume that~$\gamma'$ is positive;
otherwise, we can simply omit the corresponding indices (if~$\gamma'_q=0$, we have~$(\varphi(w)\gamma')_q=0$ for every word~$w$).
Let~$D$ be the diagonal matrix with coefficients~$D_{i,i}=\gamma'_i$.
The triple~$(\lambda D,\psi,D^{-1}\gamma')$ given by~$\psi(w)=D^{-1}\varphi(w) D$ satisfies conditions (i) and (ii).

Finally, a similar argument shows that we can change~$\lambda$ to a vector~$\lambda'$ such that~$\lambda'M=\lambda'$.

We can now consider the set~$B=\{1,2\ldots,n\}\times A$, where~$n$ is the dimension of the matrices~$\varphi(w)$.
Let~$N$ be the matrix
\begin{displaymath}
N_{p,q}=\begin{cases}\varphi(b)_{i,j}&\text{ if~$p=(i,a)$ and $q=(j,b)$,}\\
0&\text{ otherwise.}
\end{cases}
\end{displaymath}
We have
\begin{displaymath}
\sum_{q\in B}N_{p,q}=\sum_{j=1}^n\sum_{b\in A}\varphi(b)_{i,j}=1,
\end{displaymath}
and thus~$N$ is stochastic.
It is easy to verify that~$\mu$ is the sofic measure defined by the Markov measure corresponding to~$(v,N)$ with~$v$ such that~$\sum_{a\in A}v_{i,a}=\lambda_i$.
\end{proof}
We do not know whether, in Theorem~\ref{theoremHP}, the fact that the measure has a non-negative linear representation is necessary, nor whether there exist invariant probability measures that have a linear representation, but no non-negative one.

\begin{example}\label{exampleBoylePetersen1}
Consider the~$1$-step Markov measure defined by the pair 
\begin{displaymath}
v=\begin{bmatrix}1/3&1/3&1/3\end{bmatrix},\quad
M=\begin{bmatrix}0&2/3&1/3\\
2/3& 1/3&0\\
1/3&0&2/3\\
\end{bmatrix}.
\end{displaymath}
The shift of finite type~$X_M$ is represented on Figure~\ref{figureExample} on the left.

Using the~$1$-block map~$f(1)=a$ and~$f(2)=f(3)=b$, we obtain an invariant sofic measure defined by the diagram of Figure~\ref{figureExample} on the right.

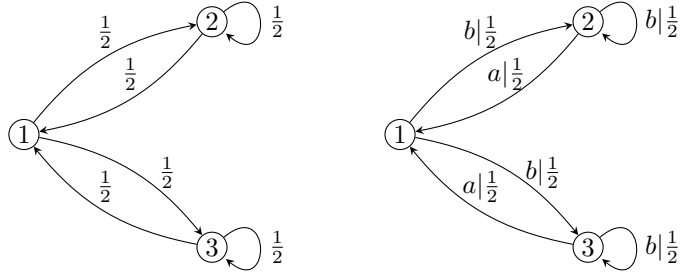
\begin{figure}[hbt]
\centering
\tikzset{node/.style={circle,draw,minimum size=0.4cm,inner sep=0.4pt}}
\tikzset{box/.style={draw,minimum size=0.4cm,inner sep=0pt}}
\tikzstyle{loop right}=[in=-40,out=40,loop]
\begin{tikzpicture}
\node[node](1)at(-5,0){$1$};\node[node](2)at(-2.5,1.5){$2$};
\node[node](3)at(-2.5,-1.5){$3$};

\draw[->,above,bend left=20](1)edge node{$\frac{2}{3}$}(2);
\draw[->,loop right,right](2)edge node{$\frac{1}{3}$}(2);
\draw[->,above,bend left=20](2)edge node{$\frac{2}{3}$}(1);
\draw[->,above,bend left=20,near end](1)edge node{$\frac{1}{3}$}(3);
\draw[->,loop right,right](3)edge node{$\frac{2}{3}$}(3);
\draw[->,above,bend left=20](3)edge node{$\frac{1}{3}$}(1);

\node[node](1)at(0,0){$1$};\node[node](2)at(2.5,1.5){$2$};
\node[node](3)at(2.5,-1.5){$3$};

\draw[->,above,bend left=20](1)edge node{$b|\frac{2}{3}$}(2);
\draw[->,loop right,right](2)edge node{$b|\frac{1}{3}$}(2);
\draw[->,above,bend left=20](2)edge node{$a|\frac{2}{3}$}(1);
\draw[->,above,bend left=20,near end](1)edge node{$b|\frac{1}{3}$}(3);
\draw[->,loop right,right](3)edge node{$b|\frac{2}{3}$}(3);
\draw[->,above,bend left=20](3)edge node{$a|\frac{1}{3}$}(1);
\end{tikzpicture}
\caption{A sofic measure.}\label{figureExample}
\end{figure}

The corresponding linear representation is given by
\[\lambda=\begin{bmatrix} 1/3&1/3&1/3\end{bmatrix},\quad\gamma=\begin{bmatrix}1&1&1\end{bmatrix}^t
\]
and
\begin{displaymath}
\varphi(a)=\begin{bmatrix}0&0&0\\2/3&0&0\\1/3&0&0\end{bmatrix},\quad
\varphi(b)=\begin{bmatrix}0&2/3&1/3\\0&1/3&0\\0&0&2/3\end{bmatrix}.
\end{displaymath}
\end{example}

\section{Markov measures vs sofic measures }\label{sectionMarkovSofic}
We now focus on the problem of determining when a sofic probability measure is a~$k$-step Markov measure.

We first need to recall the notion of minimal linear representation.
A linear representation~$(\lambda,\varphi,\gamma)$ of dimension~$n$ is \emph{reduced} if
\begin{enumerate}[label=(\roman*)]
\item the vectors~$\lambda\varphi(w)$ for~$w\in A^*$ generate~$\R^n$, and
\item the vectors~$\varphi(w)\gamma$ for~$w\in A^*$ generate~$\R^n$.
\end{enumerate}
It is classical (see~\cite{BerstelReutenauer2011} or~\cite{Sakarovitch2009}) that there is, up to linear isomorphism, a unique reduced equivalent linear representation and that, for every linear representation~$(\lambda,\varphi,\gamma)$, there is a reduced linear representation~$(\lambda',\varphi',\gamma')$ such that~$\lambda=\begin{bmatrix}*&\lambda'&0\end{bmatrix}$,~$\gamma=\begin{bmatrix}0&\gamma'&*\end{bmatrix}^t$ and, for every word~$w\in A^*$,
\begin{displaymath}
\varphi(w)=\begin{bmatrix}*&0&0\\ *&\varphi'(w)&0\\ *&*&*\end{bmatrix}.
\end{displaymath}

We give three examples.
In the first one, we consider again the sofic measure of Example~\ref{exampleBoylePetersen1}.

\begin{example}
The sofic measure defined by the representation~$(\lambda,\varphi,\gamma)$
of Example \ref{exampleBoylePetersen1} is reduced.
Indeed, we have
\begin{displaymath}
\lambda=\begin{bmatrix}\frac{1}{3}&\frac{1}{3}&\frac{1}{3}\end{bmatrix},\quad
\lambda\varphi(a)=\begin{bmatrix}\frac{1}{3}&0&0\end{bmatrix},\quad
\lambda\varphi(ab)=\begin{bmatrix}0&\frac{2}{9}&\frac{1}{9}\end{bmatrix}
\end{displaymath}
and thus the vectors~$\lambda\varphi(w)$ generate~$\R^3$.
Symmetrically, we have
\begin{displaymath}
\gamma=\begin{bmatrix}1\\1\\1\end{bmatrix},\quad
\varphi(a)\gamma=\begin{bmatrix}0\\2/3\\1/3\end{bmatrix},\quad
\varphi(b)\gamma=\begin{bmatrix}1\\1/3\\2/3\end{bmatrix},
\end{displaymath}
and thus the vectors~$\varphi(w)\gamma$ also generate~$\R^3$.
\end{example}

In the second example, we modify the coefficients of the 1-step Markov measure to
obtain a sofic measure with a minimal representation of smaller dimension.

\begin{example}
Consider now the~$1$-step Markov measure represented in Figure~\ref{figureExample2} on the left.
Using the same~$1$-block factor map as in Example~\ref{exampleBoylePetersen1}, we obtain the sofic measure represented on the right.

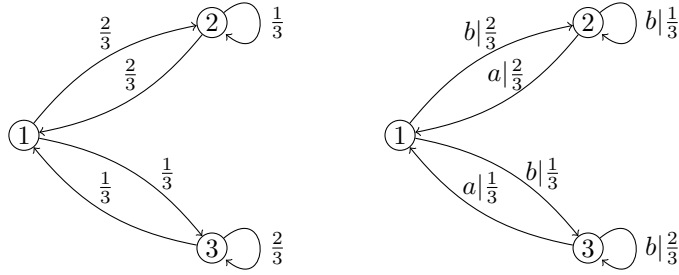
\begin{figure}[hbt]
\centering
\tikzset{node/.style={circle,draw,minimum size=0.4cm,inner sep=0.4pt}}
\tikzset{box/.style={draw,minimum size=0.4cm,inner sep=0pt}}
\tikzstyle{loop right}=[in=-40,out=40,loop]
\begin{tikzpicture}[>=stealth]
\node[node](1)at(-5,0){$1$};\node[node](2)at(-2.5,1.5){$2$};
\node[node](3)at(-2.5,-1.5){$3$};

\draw[->,above,bend left=20](1)edge node{$\frac{1}{2}$}(2);
\draw[->,loop right,right](2)edge node{$\frac{1}{2}$}(2);
\draw[->,above,bend left=20](2)edge node{$\frac{1}{2}$}(1);
\draw[->,above,bend left=20,near end](1)edge node{$\frac{1}{2}$}(3);
\draw[->,loop right,right](3)edge node{$\frac{1}{2}$}(3);
\draw[->,above,bend left=20](3)edge node{$\frac{1}{2}$}(1);

\node[node](1)at(0,0){$1$};\node[node](2)at(2.5,1.5){$2$};
\node[node](3)at(2.5,-1.5){$3$};

\draw[->,above,bend left=20](1)edge node{$b|\frac{1}{2}$}(2);
\draw[->,loop right,right](2)edge node{$b|\frac{1}{2}$}(2);
\draw[->,above,bend left=20](2)edge node{$a|\frac{1}{2}$}(1);
\draw[->,above,bend left=20,near end](1)edge node{$b|\frac{1}{2}$}(3);
\draw[->,loop right,right](3)edge node{$b|\frac{1}{2}$}(3);
\draw[->,above,bend left=20](3)edge node{$a|\frac{1}{2}$}(1);
\end{tikzpicture}
\caption{A sofic measure.}\label{figureExample2}
\end{figure}
The corresponding linear representation is~$(\lambda,\varphi,\gamma)$, with
\begin{displaymath}
\varphi(a)=\begin{bmatrix}0&0&0\\1/2&0&0\\1/2&0&0\end{bmatrix},\quad
\varphi(b)=\begin{bmatrix}0&1/2&1/2\\0&1/2&0\\0&0&1/2\end{bmatrix},
\end{displaymath}
$\lambda=\begin{bmatrix}1/3&1/3&1/3\end{bmatrix}$ and~$\gamma=\begin{bmatrix}1&1&1\end{bmatrix}^t$.

We use the algorithm computing the reduced representation, as described in~\cite{BerstelReutenauer2011} or~\cite{Sakarovitch2009}.
The vector space generated by the vectors~$\varphi(w)\gamma$ has dimension two.
Using the basis formed by the columns of the matrix~$L$ below, we obtain the equivalent representation~$(\lambda',\varphi',\gamma')$ with~$\gamma=L\gamma'$,~$\lambda'=\lambda L$ and~$\varphi(w)L=L\varphi'(w)$, given by
\begin{displaymath}
L=\begin{bmatrix}1&0\\0&1\\0&1\end{bmatrix},
\lambda'=\begin{bmatrix}1/3&2/3\end{bmatrix},
\varphi'(a)=\begin{bmatrix}0&0\\1/2&0\end{bmatrix},
\varphi'(b)=\begin{bmatrix}0&1\\0&1/2\end{bmatrix},
\gamma'=\begin{bmatrix}1\\1\end{bmatrix}.
\end{displaymath}
\end{example}

The following example shows that the reduced representation corresponding
to a non-negative linear representation need not be non-negative.

\begin{example}
Let~$\lambda=\frac{1}{\sigma}\begin{bmatrix}1&p&r&q&s(r+q)\end{bmatrix}$,~$\gamma=\begin{bmatrix}1&1&1&1&1\end{bmatrix}^t$, and
\begin{displaymath}
\varphi(a)=
\begin{bmatrix}0&p&0&0&0\\ 0&0&r&0&0\\ r&0&0&0&0\\ 0&0&r&0&0\\r&0&0&0&0
\end{bmatrix},\quad
\varphi(b)=
\begin{bmatrix}0&0&0&q&0\\ s&0&0&0&0\\ 0&0&0&0&s\\ 0&0&0&0&s\\s&0&0&0&0
\end{bmatrix},
\end{displaymath}
with~$p,q,r,s>0$,~$p+q=r+s=1$ and~$\sigma=2+r+s(r+q)$.
An equivalent reduced representation can be obtained as follows.
First, the vectors~$\lambda\varphi(w)$ for~$w\in A^*$ generate the space~$\R^5$.
Next, the vectors~$\varphi(w)\gamma$ generate the orthogonal of the vector~$v=\begin{bmatrix}0&1&1&-1&-1\end{bmatrix}$.
Thus, taking as basis of $\R^5$ the row vectors~$e_1$, $e_2$, $e_3$, $e_4$ and~$e_2+e_3-e_4-e_5$ (where~$e_i$ is the~$i$\textsuperscript{th} canonical basis vector), we obtain
\begin{displaymath}
\varphi(a)\sim\begin{bmatrix}0&p&0&0&0\\0&0&r&0&0\\r&0&0&0&0\\0&0&r&0&0\\
0&0&0&0&0\end{bmatrix},\quad
\varphi(b)\sim\begin{bmatrix}0&0&0&q&0\\s&0&0&0&0\\0&s&s&-s&-s\\0&s&s&-s&-s\\
0&0&0&0&0\end{bmatrix},
\end{displaymath}
where~$\sim$ denotes the conjugacy of matrices.
Thus, since the last row is zero, considering the upper-left hand corner of dimension~$4$
of these matrices, we obtain a linear representation~$(\lambda',\varphi',\gamma')$ of dimension~$4$ with 
\[\lambda'=\frac{1}{\sigma}\begin{bmatrix}1&p+s(r+q)&r+s(r+q)&q-s(r+q)\end{bmatrix}\]
and~$\gamma'=\begin{bmatrix}1&1&1&1\end{bmatrix}^t$.
This representation is minimal when~$p\neq r$;
otherwise, the measure is Bernoulli and the minimal representation
has dimension~$1$.
\end{example}

The following result is from~\cite{Holland1968}.
\begin{theorem}\label{theoremHolland}
An invariant sofic measure is~$k$-step Markov if and only if, in its minimal linear representation,
each matrix~$\varphi(w)$
corresponding to a word~$w$  of length~$k$ has of rank at most one.
\end{theorem}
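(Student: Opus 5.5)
We prove the two implications separately, working with a minimal (reduced) representation $(\lambda,\varphi,\gamma)$ of dimension $n$ of $\pi=\hat{\nu}$. The basic tool is reducedness: since the vectors $\lambda\varphi(u)$ span $\R^n$ and the vectors $\varphi(v)\gamma$ span $\R^n$, a matrix $\varphi(w)$ is determined by the bilinear form $(u,v)\mapsto \pi(uwv)=\lambda\varphi(u)\varphi(w)\varphi(v)\gamma$. Moreover, its rank equals the rank of the (infinite) Hankel-type matrix $H_w=(\pi(uwv))_{u,v\in A^*}$.

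\emph{Sufficiency.} Suppose every $\varphi(w)$ with $|w|=k$ has rank at most one. Write $\varphi(w)=\gamma_w\lambda_w$ with $\gamma_w$ a column and $\lambda_w$ a row, or $\varphi(w)=0$. For a word $uwv$ with $|w|=k$, we get $\pi(uwv)=(\lambda\varphi(u)\gamma_w)(\lambda_w\varphi(v)\gamma)$, a product of a function of $u$ and a function of $v$. Consequently $\pi(uwv)\pi(w)=\pi(uw)\pi(wv)$ for all $u,v$; indeed both sides equal $\alpha(u)\beta(v)\alpha(\varepsilon)\beta(\varepsilon)$. If $\pi(w)>0$, this says that conditionally on the block $w$, past and future are independent. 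That is exactly the $k$-step Markov property: for $b\in A$,
\[
\pi(uwb)/\pi(uw)=\pi(wb)/\pi(w),
\]
which shows that the image under $\gamma_k$ satisfies \eqref{eqhatnu} with a transition matrix on $\cL_k$. If $\pi(w)=0$, then $\pi(uwv)=0$ by \eqref{eqpi}, \eqref{eqpiR} and non-negativity of $\pi$, so nothing needs to be checked. One must also verify that the resulting $1$-step measure on $X^{(k)}$ has the stationary vector $v_w=\pi(w)$ and the stochastic matrix $M_{w,w'}=\pi(wb)/\pi(w)$ when $w'$ is the suffix of $wb$. This follows from invariance of $\nu$. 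Irreducibility is not essential here, or can be handled by restricting to the support.

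\emph{Necessity.} Suppose $\nu$ is $k$-step Markov. Then for $|w|=k$ and $\pi(w)>0$, the Markov property gives by induction on $|v|$ that $\pi(uwv)=\pi(uw)\pi(wv)/\pi(w)$, and symmetrically on the left using invariance. Hence $H_w$ is the outer product of the vectors $(\pi(uw))_u$ and $(\pi(wv))_v$, so it has rank at most one. If $\pi(w)=0$, then $H_w=0$. Now use minimality: choose words $u_1,\dots,u_n$ and $v_1,\dots,v_n$ with $\lambda\varphi(u_i)$ and $\varphi(v_j)\gamma$ bases. The $n\times n$ matrix $P\varphi(w)Q$, where $P$ has rows $\lambda\varphi(u_i)$ and $Q$ has columns $\varphi(v_j)\gamma$, is a submatrix of $H_w$, hence has rank at most one. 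Since $P$ and $Q$ are invertible, $\varphi(w)$ itself has rank at most one.

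The main obstacle is justifying the factorization $\pi(uwv)\pi(w)=\pi(uw)\pi(wv)$ rigorously from the definition of $k$-step Markov via the higher block code, in particular handling words of probability zero and the left-extension, where invariance \eqref{eqpiR} is needed. Once that identity is established, both directions are short linear algebra. I would therefore first prove a lemma stating that $\nu$ is $k$-step Markov if and only if this identity holds for all $u,v$ and all $w$ of length $k$, and then apply the rank argument above.
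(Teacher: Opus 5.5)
Your proof is correct. It differs from the paper's mainly in the necessity direction, and partly in how the sufficiency direction is organized.

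\textbf{Necessity.} The paper proves ``$k$-step Markov $\Rightarrow$ rank at most one'' by writing down an explicit non-minimal, non-negative representation indexed by $\cL_k(X)$, coming from the Markov chain on $X^{(k)}$. In it, each $\varphi(w)$ with $|w|=k$ has a single non-zero column. The paper then transfers the bound to the minimal representation, implicitly through the block-triangular form of the reduction, where the minimal matrices appear as diagonal blocks of conjugates of the original ones.

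You instead use the identity $\rank\varphi(w)=\rank H_w$, valid in any reduced representation. Combined with the factorization $\pi(uwv)\pi(w)=\pi(uw)\pi(wv)$, this makes $H_w$ an outer product. Your route needs neither an auxiliary representation nor the structure theorem for reductions. It also shows directly that the condition depends only on $\pi$, not on the chosen minimal representation. The paper's route, in exchange, exhibits a concrete non-negative representation in which the rank condition is visible.

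\textbf{Sufficiency.} Both arguments rest on the same rank-one computation, which needs no minimality. The paper does $k=1$ via Lemma~\ref{lemmaJM} and then reduces general $k$ to $k=1$ through the higher block code with $\psi(\gamma_k(u))=\varphi(u)$. You instead work with blocks of length $k$ directly and build $(v,M)$ on $\cL_k$, in effect proving the $k$-block analogue of Lemma~\ref{lemmaJM}.

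Your way is the more robust one. A product $\psi(y_1)\cdots\psi(y_m)$ over overlapping blocks is not $\varphi$ of the underlying word. So the paper's reduction, as written, does not literally give a representation of the image measure, and your direct argument avoids the issue.

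\textbf{Irreducibility.} The one loose end you flag, irreducibility of $M$, is settled by ergodicity. $\nu$ is the image of an irreducible Markov measure, hence ergodic. Therefore the chain on $\{w:\pi(w)>0\}$, which has a positive stationary vector, consists of a single closed class. The paper does not address this point either.
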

We use the following lemma in the proof of Theorem~\ref{theoremHolland}.
\begin{lemma}\label{lemmaJM}
A probability measure~$\mu$ on~$A^\Z$ is~$1$-step Markov if and only if
\begin{displaymath}
\hat{\mu}(abw)=\frac{\hat{\mu}(ab)\hat{\mu}(bw)}{\hat{\mu}(b)}
\end{displaymath}
for every~$a,b\in A$ and~$w\in A^*$.
\end{lemma}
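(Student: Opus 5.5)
We prove both directions directly from the definition of a $1$-step Markov measure as the measure defined by a pair $(v,M)$, with the convention that the displayed identity is read for $\hat{\mu}(b)>0$. If $\hat{\mu}(b)=0$, then $\hat{\mu}(ab)=\hat{\mu}(bw)=\hat{\mu}(abw)=0$, so the identity is read as $0=0$.

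For the forward direction, assume $\mu$ is defined by $(v,M)$, so that
\[
\hat{\mu}(a_1\cdots a_n)=v_{a_1}M_{a_1,a_2}\cdots M_{a_{n-1},a_n}.
\]
Write $w=c_1\cdots c_m$. Then
\[
\hat{\mu}(abw)=v_aM_{a,b}M_{b,c_1}\cdots M_{c_{m-1},c_m},\qquad
\hat{\mu}(bw)=v_bM_{b,c_1}\cdots M_{c_{m-1},c_m},\qquad
\hat{\mu}(ab)=v_aM_{a,b}.
\]
Hence $\hat{\mu}(ab)\hat{\mu}(bw)/\hat{\mu}(b)=v_aM_{a,b}\,v_bM_{b,c_1}\cdots M_{c_{m-1},c_m}/v_b=\hat{\mu}(abw)$. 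The case $w=\varepsilon$ is trivial.

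For the converse, assume the identity holds. Set $v_b=\hat{\mu}(b)$, set $M_{b,c}=\hat{\mu}(bc)/\hat{\mu}(b)$ when $\hat{\mu}(b)>0$, and set $M_{b,c}=\delta_{b,c}$ otherwise.
\begin{itemize}
\item By \eqref{eqpi}, each row of $M$ sums to $1$, so $M$ is stochastic.
\item Since $\mu$ is a probability measure, $v$ is stochastic.
\item For the invariance $vM=v$, compute $(vM)_c=\sum_b\hat{\mu}(bc)=\hat{\mu}(c)$ by \eqref{eqpiR}. This step requires $\mu$ to be invariant. In the intended use, $\mu$ is the image of an invariant sofic measure, so I will assume invariance, as the context of Theorem~\ref{theoremHolland} provides.
\end{itemize}

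Next we show by induction on $n$ that $\hat{\mu}(a_1\cdots a_n)=v_{a_1}M_{a_1,a_2}\cdots M_{a_{n-1},a_n}$. The cases $n\le 2$ hold by definition. For the inductive step, apply the hypothesis with $a=a_1$, $b=a_2$ and $w=a_3\cdots a_n$:
\[
\hat{\mu}(a_1\cdots a_n)=\frac{\hat{\mu}(a_1a_2)}{\hat{\mu}(a_2)}\,\hat{\mu}(a_2\cdots a_n)=M_{a_1,a_2}\,\hat{\mu}(a_2\cdots a_n).
\]
The induction hypothesis then gives $\hat{\mu}(a_2\cdots a_n)=v_{a_2}M_{a_2,a_3}\cdots M_{a_{n-1},a_n}$. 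Substituting, and using $\hat{\mu}(a_1a_2)=v_{a_1}M_{a_1,a_2}$, yields the product formula for $\hat{\mu}(a_1\cdots a_n)$. When some factor vanishes, both sides are $0$ by \eqref{eqpi}, since a word containing a null-measure factor has measure $0$. Since $\hat{\mu}$ determines $\mu$, $\mu$ is the Markov measure of $(v,M)$.

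The main obstacle is a technical point: the paper's definition requires $M$ to be irreducible. The $M$ constructed above need not be irreducible; it may decompose into several classes, and the null symbols carry identity rows. I would handle this in one of two ways. The first is to restrict to the support: on $\Sup(\mu)$, invariance forces every positive-measure symbol to lie in a recurrent class. The second is to observe that the statement is really used only for measures arising from Theorem~\ref{theoremHolland}, where the chain can be restricted to an irreducible component. Either way, the irreducibility condition needs a brief remark rather than a computation.
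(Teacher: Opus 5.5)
Your argument is essentially the paper's. The forward direction is the same cancellation of $v_b$, and the converse uses the same $v_a=\hat{\mu}(a)$, $M_{a,b}=\hat{\mu}(ab)/\hat{\mu}(a)$ and the same induction. You are more careful than the paper, which never checks that $M$ is stochastic or that $vM=v$. You are right that $vM=v$ needs invariance through \eqref{eqpiR}, which the statement does not assume; your convention that $\hat{\mu}(b)=0$ forces $\hat{\mu}(ab)=0$ also uses \eqref{eqpiR}.

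There is one slip in your inductive display. $\hat{\mu}(a_1a_2)/\hat{\mu}(a_2)$ is not $M_{a_1,a_2}$, which you defined as $\hat{\mu}(a_1a_2)/\hat{\mu}(a_1)$. Delete the middle equality; the correct computation is
\[
\hat{\mu}(a_1\cdots a_n)=\frac{v_{a_1}M_{a_1,a_2}}{v_{a_2}}\,v_{a_2}M_{a_2,a_3}\cdots M_{a_{n-1},a_n}.
\]
This is what your next sentence effectively does, and what the paper writes.

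The genuine gap is irreducibility, and your first remedy fails. A positive stationary vector only makes every letter of positive probability recurrent. The chain can still split into several closed classes, and then no remark helps, because the converse is false. Take $A=\{c,d\}$ and let $\mu$ be the average of the point masses at the two constant sequences. Then $\mu$ is invariant and $\hat{\mu}(c)=\hat{\mu}(d)=\frac12$. Also $\hat{\mu}(cd)=\hat{\mu}(dc)=0$, and $\hat{\mu}(u)$ is $\frac12$ or $0$ according as $u$ is or is not a nonempty power of one letter. So the identity holds for all $a,b,w$. Yet any pair defining $\mu$ must have $M$ equal to the $2\times 2$ identity, which is reducible; equivalently, $\mu$ is not ergodic. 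Restricting to $\Sup(\mu)$ changes nothing. The converse therefore needs ergodicity, which the paper's proof passes over silently.

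Your second remedy is the right one, but not as ``restricting to an irreducible component'', since that changes the measure. The correct argument runs as follows.
\begin{itemize}
\item A sofic measure is the image of an irreducible Markov measure, hence ergodic, and so is its image under $\gamma_k$.
\item Suppose the restriction of $M$ to the letters of positive probability had closed classes $C_1,\ldots,C_r$ with $r\geqslant 2$.
\item Then $E=\{x\mid x_0\in C_1\}$ satisfies $\mu(E\setminus S^{-1}E)=\sum_{b\in C_1,\,c\notin C_1}\hat{\mu}(bc)=0$, so $E$ is invariant modulo $\mu$.
\item Its measure $\sum_{b\in C_1}v_b$ lies in $(0,1)$, contradicting ergodicity.
\end{itemize}
Hence $r=1$, and $(v,M)$ restricted to these letters is an irreducible pair defining $\mu$.
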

\begin{proof}
If~$\mu$ is the~$1$-step Markov measure defined by the pair~$(v,M)$,
we have for~$a,b\in A$ and~$w=a_1a_2\cdots a_n$,
\begin{eqnarray*}
\hat{\mu}(abw)&=&v_aM_{a,b}M_{b,a_1}M_{a_1,a_2}\cdots M_{a_{n-1},a_n}\\
&=&\frac{(v_aM_{a,b})(v_bM_{b,a_1}\cdots)}{v_b}=\frac{\hat{\mu}(ab)\hat{\mu}(bw)}{\hat{\mu(b)}}.
\end{eqnarray*}
Conversely, set~$v_a=\hat{\mu}(a)$ and 
\begin{displaymath}
M_{a,b}=\frac{\hat{\mu}(ab)}{\hat{\mu}(a)}.
\end{displaymath}
Then
\begin{eqnarray*}
\hat{\mu}(a_1\cdots a_n)
&=&\frac{\hat{\mu}(a_1a_2)\hat{\mu}(a_2\cdots a_n)}{\hat{\mu}(a_2)}
=\hat{\mu}(a_1)\frac{\hat{\mu}(a_1a_2)}{\hat{\mu}(a_1)}
\frac{\hat{\mu}(a_2\cdots a_n)}{\hat{\mu}(a_2)}\\
&=&v_{a_1}M_{a_1,a_2}\frac{\hat{\mu}(a_2\cdots a_n)}{\hat{\mu}(a_2)},
\end{eqnarray*}
proving by induction on~$n$ that~$\hat{\mu}(a_1a_2\cdots a_n)=v_{a_1}M_{a_1,a_2}\cdots M_{a_{n-1},a_n}$.
\end{proof}

\begin{proofof}{of Theorem~\ref{theoremHolland}}
Assume first that~$k=1$.

If~$\mu$ is~$1$-step Markov, let~$(v,M)$ be the pair defining~$\mu$.
Then~$\mu$ has the linear representation~$(\lambda,\varphi,\gamma)$
with~$\lambda=v$,~$\gamma$ with all coefficients~$1$ and
for every~$a,b,c\in A$
\begin{displaymath}
\varphi(a)_{b,c}=\begin{cases}M_{b,c}&\text{ if~$a=c$,}\\
0&\text{otherwise.}
\end{cases}
\end{displaymath}
In this case, all matrices~$\varphi(a)$ have only one non-zero
column and are thus of rank~$1$. This implies that
the matrices of the minimal representation have the same property.

Conversely, consider~$a,b\in A$ and~$w\in A^*$. Since~$\varphi(a),\varphi(b)$ have rank~$1$, we can write~$\varphi(a)=c_a\ell_a$,~$\varphi(b)=c_b\ell_b$
where~$c_a,c_b$ are column vectors and~$\ell_a,\ell_b$ are row
vectors. We have, permuting the factors which are scalars,
\begin{eqnarray*}
\hat{\mu}(ab)\hat{\mu}(bw)&=&(\lambda c_a\ell_ac_b\ell_b\gamma)
(\lambda c_b\ell_b \varphi(w)\gamma)\\
&=&\lambda c_a\ell_ac_b(\ell_b\gamma)
(\lambda c_b)(\ell_b \varphi(w)\gamma)\\
&=&\lambda c_a\ell_ac_b(\ell_b \varphi(w)\gamma)
(\lambda c_b)(\ell_b\gamma)\\
&=&\hat{\mu}(abw)\hat{\mu}(b),
\end{eqnarray*}
which shows that~$\mu$ is~$1$-step Markov by Lemma~\ref{lemmaJM}.

In the general case, assume first that~$\mu$ is~$k$-step Markov on~$X$.
Recall that $\gamma_k$ denotes a bijection from~$\cL_k(X)$ onto~$A_k$.
Let~$(v,M)$ be the pair defining a~$1$-step Markov measure on~$X^{(k)}$ such that~$\mu(u)=\hat{\nu}(u)$ for every~$u\in\cL_k(X)$.
Let~$(\lambda,\varphi,\gamma)$ be the linear representation of~$\mu$ defined by~$\lambda=v$,~$\gamma=\begin{bmatrix}1&&\ldots&1\end{bmatrix}^t$ and,
for every~$b\in A$,
\begin{displaymath}
\varphi(a)=\begin{cases}M_{u,v}&\text{ if~$ua=bv$ for some~$b\in A$}\\
0&\text{ otherwise}.\end{cases}
\end{displaymath}
Then, for every~$w\in\cL_k(X)$, the matrix~$\varphi(w)$ has at most one non-zero column, and thus is of rank at most~$1$.

Conversely, let~$(\lambda,\varphi,\gamma)$ be the minimal linear representation of~$\mu$.
Let~$(\lambda,\psi,\gamma)$ be the linear representation defined by~$\psi(\gamma_k(w))=\varphi(u)$ for every~$u\in\cL_k(X)$.
Then all matrices~$\psi(\gamma_k(u))$ have rank at most~$1$, and thus the probability measure~$\nu$ defined by~$\hat{\nu}(\gamma_k(u))=\hat{\mu}(u)$ is~$1$-step Markov, as we have just seen.
This implies that~$\mu$ is~$k$-step Markov.
\end{proofof}

In the next example, the measure is shown to be~$1$-step Markov.
\begin{example}
Let~$\varphi$ be the linear representation given by
\[
\varphi(a)=\begin{bmatrix}2/3&0&0&0\\0&0&0&0\\2/3&0&0&0\\0&0&0&0\end{bmatrix}
,\quad
\varphi(b)=\begin{bmatrix}0&1/3&0&0\\0&0&0&0\\0&0&0&0\\0&1/3&0&0\end{bmatrix},\]
\[
\varphi(c)=\begin{bmatrix}0&0&0&0\\0&0&1/3&0\\0&0&0&0\\0&0&1/3&0\end{bmatrix},\quad
\varphi(d)=\begin{bmatrix}0&0&0&0\\0&0&0&2/3\\0&0&0&1/3\\0&0&0&1/3\end{bmatrix},
\]
$\lambda=\begin{bmatrix}1/3&5/24&1/6&7/24\end{bmatrix}$ and~$\gamma=\begin{bmatrix}1&1&1&1\end{bmatrix}^t$.
The vectors~$\varphi(w)\gamma$ generate the orthogonal of~$\begin{bmatrix}1&1&-1&-1\end{bmatrix}$.
Thus, taking as basis of $\R^4$ the row vectors~$e_1$, $e_2$, $e_3$ and~$e_1+e_2-e_3-e_4$, we obtain the reduced representation
\[
\varphi'(a)=\begin{bmatrix}2/3&0&0\\0&0&0\\2/3&0&0\end{bmatrix},\quad
\varphi'(b)=\begin{bmatrix}0&1/3&0\\0&0&0\\0&0&0\end{bmatrix},\]
\[
\varphi'(c)=\begin{bmatrix}0&0&0\\0&0&1/3\\0&0&0\end{bmatrix},\quad
\varphi'(d)=\begin{bmatrix}0&0&0\\2/3&2/3&-2/3\\1/3&1/3&-1/3\end{bmatrix},
\]
with~$\lambda'=\begin{bmatrix}15/24&1/2&-1/8\end{bmatrix}$ and~$\gamma'=\begin{bmatrix}1&1&1\end{bmatrix}^t$.
Since each matrix~$\varphi'(a),\ldots,\varphi'(d)$ 
has rank 1, the measure is 1-step Markov.
\end{example}
\section{Identification of Markov sofic measures}\label{sectionIdentification}
We will now give an effective version of Theorem \ref{theoremHolland}, by giving a bound~$K$ such that if a sofic measure is not~$K$-step Markov, then it is not~$k$-step Markov for any~$k\geqslant 1$.
This improves the result given in~\cite{BoylePetersen2011}, in which the bound~$K$ is much larger (it is expressed as
an unbounded pile-up of exponentials).

We prove the following result.
\begin{theorem}\label{theoremNew}
Let~$\mu$ be an invariant sofic measure and let~$n$ be the dimension of its minimal representation.
Set~$K=2^{n^2-1}$. If~$\mu$ is not a~$K$-step Markov measure, then it is not a~$k$-step Markov measure for any~$k\geqslant 1$.
\end{theorem}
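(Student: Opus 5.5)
By Theorem~\ref{theoremHolland}, it suffices to prove the following purely algebraic statement. Let $(\lambda,\varphi,\gamma)$ be the minimal representation of $\mu$, of dimension $n$. Suppose there is some $k$ such that every $\varphi(w)$ with $|w|=k$ has rank at most one. Then every $\varphi(w)$ with $|w|=K=2^{n^2-1}$ already has rank at most one. Note that the property is monotone in $k$: if all words of length $k$ give rank at most one, so do all longer words, since $\rank\varphi(uv)\leqslant\rank\varphi(u)$. So we argue by contraposition. Assume some word $w=a_1a_2\cdots a_m$ with $m\geqslant K$ satisfies $\rank\varphi(w)\geqslant 2$. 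We will produce, from $w$, arbitrarily long words whose images still have rank at least~$2$. This shows that $\mu$ is not $k$-step Markov for any $k$.

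To pump, write $w=xyz$ and try to show that $\varphi(xy^jz)$ has rank at least $2$ for all $j$. For each factorization point $i\in\{0,\ldots,m\}$, consider the pair of subspaces $(U_i,V_i)$ in the matrix space $\R^{n\times n}$ (of dimension $n^2$) naturally attached to the prefix and the suffix. One natural choice is to take $U_i$ to be the linear span of the matrices $\varphi(a_1\cdots a_i)$ together with all shorter prefixes, and $V_i$ the analogous span for suffixes. These are nested chains, but that alone only gives a bound linear in $n^2$ on the number of strict jumps.

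The real tool is the extremal set-theory result of Hegedüs--Frankl, a skew version of Lovász's theorem on pairs of subspaces. It says that a sequence of pairs $(U_i,V_i)$ with $\dim U_i+\dim V_i$ bounded, satisfying the skew condition $U_i\cap V_j=0$ for $i<j$ but $U_i\cap V_i\neq0$, has length at most about $\binom{a+b}{a}$. This is at most $2^{n^2-1}$ when $a+b$ is governed by $n^2$. Concretely, I would encode ``$\varphi(a_1\cdots a_i)\,\varphi(a_{j+1}\cdots a_m)$ has rank at least $2$'' as a failure of a bilinear or exterior-square condition. Rank at least $2$ means the second exterior power $\Lambda^2\varphi(\cdot)$ is nonzero. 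Then define, for each $i$, the subspace $U_i$ spanned by a relevant left vector and $V_i$ by the corresponding right annihilator, so that the skew-intersection pattern holds exactly when no pumping is possible.

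Assuming $m\geqslant K$, the Hegedüs--Frankl bound forces two indices $i<j$ whose pairs break the skew pattern. Unwinding the encoding, this gives a factor $y=a_{i+1}\cdots a_j$ such that inserting extra copies of $y$ preserves rank at least $2$. The mechanism is that the relevant vector space configuration at position $j$ is already reachable at position $i$, so the loop $y$ can be iterated, yielding words of every length with rank at least $2$.

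The main obstacle is choosing the encoding so that the skew-Lovász hypothesis matches the obstruction to pumping exactly, and so that the ambient dimensions add up to $n^2$ rather than something larger. That matching is what makes the bound come out as $2^{n^2-1}$. I would first try taking $U_i=\{\varphi(u)X: \ldots\}$-style subspaces of $\R^{n\times n}$ built from prefix matrices, and $V_i$ the orthogonal complements of the spans of suffix matrices. I would then verify the intersection conditions using $\rank\geqslant2$ via $2\times2$ minors. I would use minimality (conditions (i) and (ii) of a reduced representation) to ensure that the spans of the $\lambda\varphi(u)$ and of the $\varphi(v)\gamma$ are all of $\R^n$, so that rank statements about $\varphi$ translate faithfully into statements about the measure.
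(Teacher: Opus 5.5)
Your reduction through Theorem~\ref{theoremHolland} and the monotonicity remark are correct. The gap is that the proposal stops where the proof has to begin. You never define $U_i$ and $V_i$, never verify an intersection pattern, and never show that a violation produces a factor $y$ that can be repeated without losing rank; you call this encoding ``the main obstacle'' yourself. Three further problems:
\begin{enumerate}[label=(\alph*)]
\item The inequality you plan to use is misquoted. With $U_i\cap V_i\neq 0$ and $U_i\cap V_j=0$ for $i<j$ there is no bound at all: take $U_i=V_i=L_i$ for infinitely many distinct lines $L_i\subseteq\R^2$. Theorem~\ref{theoremHF} needs $\dim(U_i\cap V_i)\leqslant t<\dim(U_i\cap V_j)$ for $i<j$.
\item Pumping directly on ``rank at least $2$'' does not work as described. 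When the letters have different ranks, the image of a prefix can be a proper subspace of the image of its last letter. After inserting a loop, the later letters act on a different subspace than in the original product, so their injectivity there is not guaranteed.
\item The exponent $n^2-1$ does not come from an ambient space of dimension $n^2$.
\end{enumerate}

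The missing idea in the paper is to stay in $\R^n$ and work one rank level at a time (Corollary~\ref{corollaryNew}). Take a longest product $\varphi(a_1)\cdots\varphi(a_m)$ of matrices of rank $r$ that still has rank $r$. Then the image of every prefix is exactly $V_i=\Im\varphi(a_i)$. With $U_i=\ker\varphi(a_{i+1})$ one has $U_i\cap V_i=0$. If $U_i\cap V_j=0$ for some $i<j$, then $a_{i+1}\cdots a_j$ could be inserted once more after $a_j$ without rank loss, contradicting maximality. Theorem~\ref{theoremHF} with $t=0$ then gives $m\leqslant 2^n+1\leqslant 2^{n+1}$. Applying this to letters, then to blocks of length $2^{n+1}$, and so on, lowers the rank bound from $n$ to $1$ in $n-1$ rounds, which yields $(2^{n+1})^{n-1}=2^{n^2-1}$.

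Your own remark about $\Lambda^2$ actually closes the gap by itself, with a better bound and no extremal set theory. By Cauchy--Binet, the matrices $B_a=\Lambda^2\varphi(a)$ act on a space $W$ of dimension $N=\binom n2$, and all their products of length $k$ are $0$. Let $W_j$ be the span of the images of all products of length $j$. Then $W_{j+1}=\sum_a B_aW_j$, so $W=W_0\supseteq W_1\supseteq\cdots$, and $W_{j+1}=W_j$ forces $W_j=W_{j+k}=0$. Hence the chain decreases strictly until it vanishes, so $W_N=0$. Therefore every $\varphi(w)$ with $|w|=\binom n2\leqslant 2^{n^2-1}$ already has rank at most one.
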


The proof uses the following result from~\cite[Theorem 1.8]{HegedusFrankl2024}, itself a variation on a result of~\cite{Lovasz1977}.
\begin{theorem}\label{theoremHF}
Let~$W$ be an~$n$-dimensional vector space, and let~$U_1,\ldots,U_m$ and~$V_1,\ldots,V_m$ be subspaces of~$W$.
Assume that there exists an integer $t \geqslant 0$ such that~$\dim(U_i\cap V_i)\leqslant t$ for~$1\leqslant i\leqslant m$ and~$\dim(U_i\cap V_j)>t$ for~$1\leqslant i<j\leqslant m$.
Then,~$m\leqslant 2^{n-t}$.
\end{theorem}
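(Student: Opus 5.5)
The plan is to reduce to the case $t=0$ by cutting with a generic subspace, and then to prove the case $t=0$ with an exterior-algebra independence argument in the spirit of Lov\'asz. Throughout, $W$ is a real vector space (the paper works over $\R$), so "generic" choices are available.

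\emph{Reduction to $t=0$.} Choose a subspace $H\subseteq W$ of codimension $t$ in general position with respect to the finitely many subspaces $U_i\cap V_j$ for $1\leqslant i\leqslant j\leqslant m$. General position is meant in the standard sense that
\[
\dim(E\cap H)=\max(\dim E-t,0)
\]
for each such $E$. This holds for $H$ in a nonempty Zariski-open subset of the Grassmannian, because $\R$ is infinite. Set $A_i=U_i\cap H$ and $B_i=V_i\cap H$, which are subspaces of the $(n-t)$-dimensional space $H$. Then $A_i\cap B_j=U_i\cap V_j\cap H$. The hypothesis $\dim(U_i\cap V_i)\leqslant t$ gives $A_i\cap B_i=0$, and $\dim(U_i\cap V_j)>t$ gives $A_i\cap B_j\neq 0$ for $i<j$. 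It therefore suffices to prove that $m\leqslant 2^{N}$ whenever subspaces $A_i,B_i$ of an $N$-dimensional space satisfy $A_i\cap B_i=0$ for all $i$ and $A_i\cap B_j\neq0$ for all $i<j$.

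\emph{The case $t=0$.} Work in the exterior algebra $\Lambda H$, which has dimension $2^{N}$. For each $i$, let $\alpha_i$ be the wedge product of a basis of $A_i$ and $\beta_i$ the wedge product of a basis of $B_i$; an empty product is $1$. The standard fact used here is that, for subspaces $A,B$ with decomposable wedges $\alpha,\beta$, we have $\alpha\wedge\beta\neq 0$ if and only if the union of the two bases is linearly independent, which happens if and only if $A\cap B=0$. Hence $\alpha_i\wedge\beta_i\neq0$ for every $i$, and $\alpha_i\wedge\beta_j=0$ whenever $i<j$.

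I then show that $\alpha_1,\dots,\alpha_m$ are linearly independent in $\Lambda H$, which gives $m\leqslant 2^{N}=2^{n-t}$. Suppose $\sum_i c_i\alpha_i=0$ with not all $c_i$ zero, and let $j$ be the largest index with $c_j\neq0$. Wedge the relation on the right with $\beta_j$; this is legitimate because the wedge product is bilinear on the whole algebra, even though the $\alpha_i$ have different degrees. The terms with $i<j$ vanish by the triangular property, and the terms with $i>j$ have zero coefficient. What remains is $c_j\,\alpha_j\wedge\beta_j=0$, which contradicts $\alpha_j\wedge\beta_j\neq0$.

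\emph{Main obstacle.} The delicate step is the general-position reduction. One must check that a single $H$ works simultaneously for all the finitely many subspaces $E=U_i\cap V_j$. For each fixed $E$, the set of codimension-$t$ subspaces $H$ meeting $E$ in more than the expected dimension is a proper Zariski-closed subset of the Grassmannian. A finite union of proper closed subsets cannot cover the irreducible Grassmannian over an infinite field, so a suitable $H$ exists. An alternative that avoids this step is to work with the exterior power $\Lambda^{n-t}$ directly, but I expect the generic-section argument to be cleaner.
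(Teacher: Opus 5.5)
Your proof is correct. Note that the paper gives no proof of this statement: it imports it as Theorem~1.8 of \cite{HegedusFrankl2024} and only ever uses the case $t=0$, in Corollary~\ref{corollaryNew}. So your argument adds something. A generic codimension-$t$ section reduces the statement to $t=0$ in dimension $n-t$. Then Lov\'asz's exterior-algebra triangularity ($\alpha_i\wedge\beta_i\neq0$, and $\alpha_i\wedge\beta_j=0$ for $i<j$) makes $\alpha_1,\dots,\alpha_m$ linearly independent in $\Lambda H$, which has dimension $2^{n-t}$. What this buys is self-containedness: the application needs only your second step, so the main theorem no longer depends on an external black box.

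A few remarks tighten the argument.
\begin{enumerate}
\item Genericity of $H$ is needed only for the diagonal spaces $U_i\cap V_i$. For $i<j$, the inequality $\dim(U_i\cap V_j\cap H)\geqslant\dim(U_i\cap V_j)-t>0$ holds for every $H$ of codimension $t$.
\item You can also avoid the Grassmannian. Cut by one hyperplane at a time, chosen not to contain any nonzero $U_i\cap V_i$. This is possible because a vector space over an infinite field is not a finite union of proper subspaces. Each cut lowers both $n$ and $t$ by one and preserves the hypotheses.
\item Your restriction to an infinite field costs nothing in general. Extending scalars preserves all the dimensions involved, since they are ranks of matrices.
\item Your reduction tacitly requires $t\leqslant n$, but this is a defect of the statement, not of your proof. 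For $t>n$ and $m=1$ the hypotheses are automatically satisfied, while $2^{n-t}<1$.
\end{enumerate}
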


\begin{corollary}\label{corollaryNew}
Let~$S$ be a set of~$n\times n$-matrices of rank~$r$ such that any long enough product of elements from~$S$ has rank at most~$r-1$.
Then, every product of~$2^{n+1}$ elements from~$S$ has rank at most~$r-1$.
\end{corollary}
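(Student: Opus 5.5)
The plan is to argue by contradiction and build, from a long product of rank $r$, a family of subspace pairs to which Theorem~\ref{theoremHF} applies with $t=0$ and $W=\R^n$ (matrices acting on column vectors). Suppose $P=M_1M_2\cdots M_m$ with $M_i\in S$ has rank $r$. For $0\leqslant i\leqslant m$ set $A_i=M_1\cdots M_i$ (prefix) and $B_i=M_{i+1}\cdots M_m$ (suffix), with empty products equal to the identity, so that $P=A_iB_i$. Every factor has rank at most $r$ and $P$ has rank $r$, so every non-empty contiguous subproduct $Q=M_{i+1}\cdots M_j$ has rank exactly $r$. The same holds for all $A_i$ with $i\geqslant 1$ and all $B_i$ with $i<m$; the two identity extremes ($A_0$ and $B_m$) are handled by the computation below.

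We define $U_i=\Im(B_i)$ and $V_i=\ker(A_i)$.

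\emph{The diagonal condition.} Since $A_iB_i=P$ has rank $r$ and $\dim \Im(B_i)\leqslant r$, the map $A_i$ is injective on $\Im(B_i)$. Hence $U_i\cap V_i=0$, so $\dim(U_i\cap V_i)\leqslant 0$.

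\emph{The off-diagonal condition.} Take $i<j$ and let $Q=M_{i+1}\cdots M_j$, which has rank $r$. We have $B_i=QB_j$, so $\Im(B_i)\subseteq \Im(Q)$. Both spaces have dimension $r$: indeed $\rank B_i\geqslant \rank P = r$. Thus $U_i=\Im(Q)$. Similarly $A_j=A_iQ$, so $\ker(Q)\subseteq\ker(A_j)$. Both kernels have dimension $n-r$, because $\rank A_j=r$. Thus $V_j=\ker(Q)$. Therefore $U_i\cap V_j=\Im(Q)\cap\ker(Q)$.

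\emph{Nonzero intersection.} It remains to see that $\Im(Q)\cap\ker(Q)\neq 0$. If it were zero, $Q$ would be injective on its image, so $\rank Q^2=\rank Q=r$. Once the rank sequence of powers stalls, it is constant, so every power $Q^N$ would have rank $r$. But $Q^N$ is a product of $N(j-i)$ elements of $S$, which for $N$ large has rank at most $r-1$ by hypothesis. This is a contradiction. This is the one step where the hypothesis is used, and it is the crux of the argument: it converts ``long products drop rank'' into a linear-algebra intersection condition on a fixed-size family.

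\emph{Conclusion.} Theorem~\ref{theoremHF} with $t=0$ applies to the $m+1$ pairs $(U_i,V_i)$, $0\leqslant i\leqslant m$, and gives $m+1\leqslant 2^n$. Hence every product of at least $2^n$ elements of $S$, in particular of $2^{n+1}$ elements, has rank at most $r-1$. The only remaining care is to check the dimension equalities at the extreme indices $i=0$ or $j=m$, where one side is the identity. These cases go through unchanged, since the equalities are derived from $\rank P=r$.
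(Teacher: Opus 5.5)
Your argument has one concrete error, at the top boundary. With your convention $B_m=I$, you get $U_m=\Im(B_m)=\R^n$ and $V_m=\ker(A_m)=\ker(P)$. Hence $U_m\cap V_m=\ker(P)$, which has dimension $n-r\geqslant 1$. (Necessarily $r<n$, since products of invertible matrices never lose rank.) Your diagonal check relied on $\dim\Im(B_i)\leqslant r$, and this fails exactly for $B_m=I$. So Theorem~\ref{theoremHF} does not apply to the $m+1$ pairs as you defined them, and your closing remark that the extreme indices ``go through unchanged'' is wrong for this one.

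The repair is immediate. $U_m$ enters no off-diagonal condition, since those only involve $U_i$ with $i<j\leqslant m$. So you may redefine $U_m=\{0\}$ and keep $V_m=\ker(P)$. All your other verifications then stand, and you get $m+1\leqslant 2^n$ as claimed. Put differently, the empty-product convention gave you the right trivial space $V_0=\ker(I)=\{0\}$ at the bottom, but the wrong one at the top. Alternatively, drop the last pair and get $m\leqslant 2^n$, which already suffices for products of $2^{n+1}$ elements.

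With this fix the proof is correct and close to the paper's. Since every contiguous subproduct has rank $r$, your spaces reduce to single factors: $\Im(B_i)=\Im(M_{i+1})$ and $\ker(A_i)=\ker(M_i)$. Up to the paper's use of row vectors, these are its pairs $(\ker(M_{i+1}),\Im(M_i))$. In both proofs the off-diagonal condition amounts to $\Im(Q)\cap\ker(Q)\neq\{0\}$ for the block $Q=M_{i+1}\cdots M_j$.

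The difference is how this condition is verified. The paper fixes a rank-$r$ product of maximal length $k$ and notes that otherwise one extra copy of the block could be spliced in, giving a longer product of rank $r$. You instead iterate $Q$ and use the stabilization of $\rank Q^N$ to contradict the hypothesis directly. Your route works for any rank-$r$ product and needs no maximality. It also avoids checking that the spliced product keeps rank $r$, a step the paper leaves implicit. Finally, your two extra boundary pairs give the slightly sharper bound $m\leqslant 2^n-1$, against the paper's $k\leqslant 2^n+1$.
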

\begin{proof}
Let~$M_1\cdots M_k$ be a product of matrices~$M_i\in S$ of rank~$r$, with~$k$ maximal.
For all $i \leqslant k-1$, let~$U_i=\ker(M_{i+1})$ and~$V_i=\Im(M_{i})$:
since~$\rank(M_iM_{i+1})=r$, we have~$U_i\cap V_i=\{0\}$.
Furthermore, when~$1\leqslant i<j\leqslant k-1$, if~$U_i\cap V_j=\{0\}$,
then~$M_1\cdots M_j(M_i\cdots M_j)M_{j+1}\cdots M_k$ has rank~$r$, in contradiction with the maximality of~$k$.
Thus,~$\dim(U_i\cap V_j)>0$. 

By Theorem~\ref{theoremHF} applied with~$m=k-1$ and~$t=0$, we obtain~$k-1\leqslant 2^n$,
and therefore~$k\leqslant 2^{n+1}$.
\end{proof}

\begin{proofof}{of Theorem~\ref{theoremNew}}
Let~$(\lambda,\varphi,\gamma)$ be the minimal representation of~$\mu$. Let~$S$ be the set of matrices~$\varphi(a)$ for~$a\in A$.
Let~$k$ be the minimal integer such that~$\mu$ is a~$k$-step Markov measure. By Theorem~\ref{theoremHolland},
every matrix in~$S^k$ has rank at most one. Therefore, applying~$n-1$ times Corollary \ref{corollaryNew},
we obtain~$k\leqslant (2^{n+1})^{n-1}=2^{n^2-1}$.
\end{proofof}

\bibliographystyle{plain}
\bibliography{probas}
\end{document}